\title{On prime degree isogenies between K3 surfaces}
\author{Samuel Boissi\`ere}
\address{Samuel Boissi\`ere, Universit\'e de Poitiers, 
Laboratoire de Math\'ematiques et Applications, 
 T\'el\'eport 2, Boulevard Marie et Pierre Curie
 BP 30179, 86962 Futuroscope Chasseneuil Cedex, France}
\email{samuel.boissiere@math.univ-poitiers.fr}
\urladdr{http://www-math.sp2mi.univ-poitiers.fr/$\sim$sboissie/}
\author{Alessandra Sarti}
\address{Alessandra Sarti, Universit\'e de Poitiers, 
Laboratoire de Math\'ematiques et Applications, 
 T\'el\'eport 2, Boulevard Marie et Pierre Curie,
 BP 30179, 86962 Futuroscope Chasseneuil Cedex, France}
\email{sarti@math.univ-poitiers.fr}
\urladdr{http://www-math.sp2mi.univ-poitiers.fr/$\sim$sarti/}
\author{Davide Cesare Veniani}
\address{Institut f\"ur Algebraische Geometrie,
Leibniz Universit\"at Hannover,
Wel\-fen\-garten 1, 30167 Hannover, Germany}
\email{veniani@math.uni-hannover.de}
\date{\today}
\newcommand{\ie}{{\it i.e. }}
\newcommand{\eg}{{\it e.g. }}
\newcommand{\resp}{{\it resp. }}
\newcommand{\viceversa}{{\it vice versa}}
\newcommand{\IC}{\mathbb C}
\newcommand{\IF}{\mathbb F}
\newcommand{\IK}{\mathbb K}
\newcommand{\IN}{\mathbb N}
\newcommand{\IP}{\mathbb P}
\newcommand{\IQ}{\mathbb Q}
\newcommand{\IR}{\mathbb R}
\newcommand{\IZ}{\mathbb Z}
\newcommand{\lra}{\longrightarrow}
\DeclareMathOperator{\Hom}{Hom}
\DeclareMathOperator{\disc}{disc}
\DeclareMathOperator{\rk}{rk}
\DeclareMathOperator{\res}{res}
\DeclareMathOperator{\Km}{Km}
\newcommand{\Tr}{{\rm{T}}}
\newcommand{\TransQY}{\Tr_{Y,\IQ}}
\newcommand{\TransQX}{\Tr_{X,\IQ}}
\newcommand{\NS}{{\rm{NS}}}
\newcommand{\HH}{{\rm{H}}}
\newcommand{\Hdg}{{\rm{Hdg}}}
\newcommand{\id}{{\rm{id}}}
\theoremstyle{plain}
\newtheorem{theorem}{Theorem}[section]
\newtheorem{proposition}[theorem]{Proposition}
\newtheorem{corollary}[theorem]{Corollary}
\theoremstyle{definition}
\theoremstyle{remark}
\newtheorem{remark}[theorem]{Remark}
\begin{document}

\begin{abstract} 
We classify prime order isogenies between algebraic K3 surfaces whose rational transcendental Hodges structures are not isometric. The morphisms of Hodge structures induced by these isogenies are 
correspondences by algebraic classes on the product fourfolds; however, they do not satisfy the hypothesis of the well-known Mukai--Nikulin theorem.
As an application we describe isogenies obtained from K3 surfaces with an action of a symplectic
automorphism of prime order.
\end{abstract}

\maketitle

\section{Introduction}

Let $X$ and $Y$ be two complex projective K3 surfaces. The Hodge conjecture 
predicts that every Hodge class 
$Z\in \HH^4(Y\times X, \IQ)\cap \HH^{2,2}(Y\times X)$ is algebraic (see \eg \cite[\S6.4]{Beauville}) . 
By K\"unneth decomposition, the question reduces to the study of the Hodge
classes~$Z$ in $\HH^2(Y,\IQ)\otimes \HH^2(X,\IQ)$. 
Using the unimodularity of the K3 lattice, this data is equivalent to a morphism $\phi$ of weight two
rational Hodge structures between $\HH^2(Y,\IQ)$ and $\HH^2(X,\IQ)$, where $\phi$
is nothing else than the correspondence induced by the class $Z$. Since the rational transcendental
lattice of a K3 surface has an irreducible Hodge structure of weight two, the morphism $\phi$
decomposes as the sum of an Hodge morphism $\phi_{\Tr}$ 
between the rational transcendental lattices $\TransQY$ and~$\TransQX$, which is either trivial or an isomorphism, and a morphism $\phi_{\NS}$
between the rational N\'eron--Severi lattices $\NS(Y)_\IQ$ and $\NS(X)_\IQ$. The morphism $\phi_\NS$
is automatically induced by an algebraic correspondence by the Lefschetz theorem on hyperplane sections,
so the question finally reduces to the algebraicity of the correspondence $\phi_\Tr$.

There is a wide literature on the topic, especially when $X=Y$ 
(see \eg \cite{ chen,galluzzilombardo, inose, inoseisog,ma,morrisonalg,morrisonisog,mukai,Nikulincorresp,Nikulinrational}).
One of the most famous results is due to Mukai~\cite[Corollary~1.10]{mukai} and Nikulin~\cite{Nikulincorresp}:
if $\rk\NS(Y)\geq 5$ and if $\phi_\Tr$ is an isometry, then $\phi_\Tr$~is algebraic. A generalization of this result
to any projective K3 surfaces has been recently announced by Buskin~\cite{Buskin}. Here the term
``isometry'' means isometry of rational quadratic spaces.
However the Hodge conjecture predicts that any homomorphism of rational transcendental Hodge structures 
of K3 surfaces is induced by an algebraic cycle, without requiring that it is an isometry. 
The question addressed in this paper is thus the following:
can \emph{non-isometric} rational transcendental Hodges structures of K3 surfaces be related 
by an algebraic correspondence?
An example was given by van Geemen and the second author in~\cite[Proposition~2.5]{vanGeemen-Sarti} where the
Hodge isomorphism between the rational transcendental lattices is induced by an isogeny of degree two 
between a K3 surface $X$ and the minimal
resolution~$Y$ of its quotient by a symplectic involution. This example was a motivation
for us to study more in general the Hodge isomorphisms induced by isogenies between K3 surfaces in the context
of the Hodge conjecture.

As noted by Morrison~\cite{morrisonalg}, the term ``isogeny'', when applied to K3 surfaces, has
several conflicting definitions in the literature. In this paper we follow the definition 
of Inose~\cite{inose}:
an isogeny between two complex projective surfaces $X$~and~$Y$ is a rational map of finite degree 
$\gamma\colon X\dashrightarrow Y$. Following Inose~\cite{inose}, the rational map~$\gamma$ induces an 
isomorphism of rational Hodge stuctures $\gamma^\ast\colon\HH^2(Y,\IQ)\to \HH^2(X,\IQ)$ 
given by an algebraic correspondence, whose component $\gamma^\ast_\Tr\colon\TransQY\to\TransQX$ can 
be described in geometric terms; 
in particular, one has $\rk\Tr_X=\rk\Tr_Y$. The morphism~$\gamma^\ast_\Tr$ is not an isometry but it
 is a dilation
with a scale factor $p$ (see Section~\ref{prelim}). Equivalently, $\gamma^\ast_\Tr$ is an isometry between $\TransQY(p)$ (where the intersection form
is multiplied by $p$) and $\TransQX$. If $\TransQY(p)$ happens to be isometric to $\TransQY$, then $\gamma^\ast_\Tr$
is closely related to an isometry of rational Hodge structure (a \emph{Hodge isometry} for short): roughly speaking, $\gamma^\ast_\Tr$
becomes an isometry after a (nonisometric) base change.
The main result of this paper is a precise condition on $\Tr_Y$ 
so that there exists \emph{no} isometry (in particular no Hodge isometry) between $\TransQY$ and $\TransQX$.

\begin{theorem}\label{witt}
Let $\gamma\colon X\dashrightarrow Y$ be an isogeny of prime order $p$ between complex projective K3 surfaces $X$ and $Y$. Then $\rk \TransQY=\rk \TransQX=:r$ and
\begin{enumerate}
\item\label{item1} If $r$ is odd, there exist no isometry between  $\TransQY$  and $\TransQX$.
\item\label{item2} If $r$ is even, there exists an isometry between $\TransQY$ and $\TransQX$ 
if and only if $\TransQY$ is isometric to $\TransQY(p)$. This property is equivalent to the following:
\begin{enumerate}
\item\label{item3} If $p=2$: for every prime number $q$ congruent to $3$ or $5$ modulo $8$, the $q$-adic valuation $\nu_q(\det \Tr_Y)$ is even.
\item\label{item4} If $p>2$: for every prime number $q >2$, $q \neq p$, such that $p$ is not a square in $\IF_q$, 
the number $\nu_q(\det \Tr_Y)$ is even and the following equation holds in $\IF_p^*/(\IF_p^*)^2$:
\begin{equation*}
\res_p (\det \Tr_Y) = (-1)^{\frac{n(n-1)}{2} + \nu_p(\det \Tr_Y)},
\end{equation*}
where $\res_p(\det \Tr_Y)$ is the residue of $\det \Tr_Y$ modulo $p$ (see definition in Section \ref{isoge}).
\end{enumerate}
\end{enumerate}
\end{theorem}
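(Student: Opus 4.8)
The plan is to reduce the entire statement to the classification of rational quadratic forms by their local invariants (Hasse--Minkowski), applied to the form obtained by scaling $\TransQY$ by $p$. By Section~\ref{prelim}, the map $\gamma^\ast_\Tr$ is an isometry $\TransQX\cong\TransQY(p)$; hence an isometry between $\TransQY$ and $\TransQX$ exists if and only if $\TransQY\cong\TransQY(p)$, which already gives the first equivalence claimed in~\eqref{item2}. Write $q:=\TransQY$, $r:=\rk q$ and $d:=\det\Tr_Y$ (viewed in $\IQ^\ast/(\IQ^\ast)^2$). Scaling the Gram matrix by $p$ multiplies its determinant by $p^r$, so $\disc\bigl(q(p)\bigr)=p^r\,\disc(q)$ in $\IQ^\ast/(\IQ^\ast)^2$. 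For part~\eqref{item1}, $r$ odd forces $p^r\equiv p$, and since $p$ is a prime it is not a square; as the discriminant is an isometry invariant, $q\not\cong q(p)$, and because $\TransQX\cong q(p)$ no isometry between $\TransQY$ and $\TransQX$ can exist.

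For part~\eqref{item2} assume $r$ even. Both $\TransQX$ and $\TransQY$ are rational transcendental lattices of projective K3 surfaces of the same rank $r$, hence have signature $(2,r-2)$; since $p>0$, scaling preserves the signature and $q\cong q(p)$ over $\IR$. Because $r$ is even, $p^r$ is a square, so $q$ and $q(p)$ have the same rank, discriminant and signature. By Hasse--Minkowski they are then isometric over $\IQ$ if and only if their Hasse--Witt invariants $\epsilon_v$ coincide at every finite prime $v$.

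The heart of the argument is the behaviour of $\epsilon_v$ under scaling. Diagonalising $q=\langle a_1,\dots,a_r\rangle$ and expanding each Hilbert symbol $(pa_i,pa_j)_v$ by bimultiplicativity, a short bookkeeping of exponents gives
\begin{equation*}
\epsilon_v\bigl(q(p)\bigr)=\epsilon_v(q)\,(p,p)_v^{\binom{r}{2}}\,(p,d)_v^{\,r-1}.
\end{equation*}
Using the identity $(p,p)_v=(p,-1)_v$ together with $r-1$ odd, the condition $\epsilon_v(q(p))=\epsilon_v(q)$ collapses to $(p,D)_v=1$, where $D:=(-1)^{r(r-1)/2}\,d$. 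Thus $q\cong q(p)$ if and only if $(p,D)_v=1$ for every finite prime $v$.

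It remains to evaluate these Hilbert symbols, which splits exactly as in the statement. For an odd prime $v=q\ne p$ one has $(p,D)_q=\bigl(\tfrac{p}{q}\bigr)^{\nu_q(d)}$, which equals $1$ unless $p$ is a non-square modulo $q$ and $\nu_q(d)$ is odd; for $p=2$ the symbol $\bigl(\tfrac{2}{q}\bigr)$ is governed by $q\bmod 8$, yielding the obstruction precisely at $q\equiv 3,5\pmod 8$ as in~\eqref{item3}. For $p>2$ the remaining and most delicate computation is at $v=p$: expanding $(p,D)_p$ produces a Legendre symbol of the unit part of $D$ together with a sign $\bigl(\tfrac{-1}{p}\bigr)^{\nu_p(d)}$, and rewriting $\res_p(D)=(-1)^{r(r-1)/2}\res_p(d)$ turns $(p,D)_p=1$ into the displayed residue equation in $\IF_p^\ast/(\IF_p^\ast)^2$ of~\eqref{item4} (with $n=r$). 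Finally, the prime $v=2$ (for $p$ odd) needs no separate hypothesis: since $(p,D)_\infty=1$ as $p>0$, the product formula $\prod_v(p,D)_v=1$ forces $(p,D)_2=1$ once all odd primes are controlled. I expect this last place-by-place translation of the local conditions — especially pinning down the exact sign at $v=p$ — to be the main technical obstacle; everything else is the formal machinery of Hasse--Minkowski and the Hilbert symbol.
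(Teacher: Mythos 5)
Your proposal is correct, and parts \eqref{item1} and the first equivalence of \eqref{item2} coincide with the paper's argument (the discriminant-mod-squares obstruction for $r$ odd, and the observation that $\gamma^\ast_\Tr$ is an isometry $\TransQY(p)\cong\TransQX$, so the question reduces to $\TransQY\cong\TransQY(p)$). Where you genuinely diverge is in deciding when $\varphi\cong\varphi(p)$: the paper diagonalizes $\varphi$, separates the entries divisible by $p$, and compares the forms place by place through the second residue homomorphisms $\bar\partial_q\colon W(\IQ)\to W(\IF_q)$ together with the classification of forms over finite fields (Propositions~\ref{prop:QQ-equivalence} and~\ref{prop:finite-field}); you instead invoke Hasse--Minkowski and compute how the Hasse--Witt invariant transforms under scaling, arriving at the clean criterion $\epsilon_v(\varphi(p))=\epsilon_v(\varphi)\Leftrightarrow(p,\Delta(\varphi))_v=1$ with $\Delta(\varphi)=(-1)^{r(r-1)/2}\det(\varphi)$. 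I checked your scaling formula $\epsilon_v(\varphi(p))=\epsilon_v(\varphi)\,(p,p)_v^{\binom{r}{2}}(p,d)_v^{r-1}$ (each $a_i$ occurs $r-1$ times among the cross terms) and the evaluation of $(p,\Delta)_v$ at $v=q\neq p$ and at $v=p$; both reproduce conditions \eqref{item3} and \eqref{item4} exactly. Your route buys a more uniform packaging --- the entire obstruction is a single family of Hilbert symbols attached to the discriminant, and the dyadic place comes for free from the product formula --- at the cost of importing the full Hasse--Minkowski classification, whereas the paper's residue-form computation is self-contained given its stated preliminaries and never needs the product formula. One small point to make explicit: you dispose of the place $v=2$ via the product formula only ``for $p$ odd,'' but the same product-formula argument is also what closes the case $p=2$ at $v=2$ (your explicit symbols there only cover the odd places); this is a presentational omission, not a mathematical gap.
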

The proof is given in Section \ref{isoge} using several results coming from lattice theory 
and the theory of Witt groups that are recalled in Section~\ref{prelim}.
This result suggests that there is a large family of isogenies $\gamma$ that produce algebraic correspondences
between nonisometric rational transcendental
Hodge structures of K3 surfaces, so that the Hodge conjecture holds in this context without Mukai--Nikulin's isometric condition. 
To construct concrete examples one can generalize the example given in~\cite[Proposition~2.5]{vanGeemen-Sarti}
to K3 surfaces with a symplectic automorphism of prime order:

\begin{corollary} \label{main}
Let $\sigma$ be a symplectic automorphism of prime order $p$ on a complex projective K3 surface $X$
and let $Y$ the minimal resolution of the quotient~$X/\sigma$, which is a K3 surface with a degree $p$
isogeny $\gamma\colon X\dashrightarrow Y$.
If $\rk\Tr_X$ is odd, then there exists no isometry between $\TransQY$ and $\TransQX$.
\end{corollary}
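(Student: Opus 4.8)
The plan is to deduce this statement as a direct application of Theorem~\ref{witt}, after verifying that the geometric construction genuinely produces an isogeny of prime order satisfying the hypotheses. First I would recall the standard structure theory of symplectic automorphisms: since $\sigma$ preserves the holomorphic two-form, its fixed locus consists of finitely many isolated points, so the quotient $X/\sigma$ acquires only rational double point (ADE) singularities at their images. By the classification of such automorphisms (Nikulin for $p=2$, Mukai for the remaining admissible primes $p\in\{3,5,7\}$), the minimal resolution $Y$ of $X/\sigma$ is again a K3 surface. Composing the quotient map $X\to X/\sigma$ with the inverse of the resolution morphism $Y\to X/\sigma$ produces a dominant rational map $\gamma\colon X\dashrightarrow Y$ whose degree equals the order of the generic fibre, namely $p$; hence $\gamma$ is an isogeny of prime order $p$ in the sense of Inose adopted throughout the paper.

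With the setup in place, Theorem~\ref{witt} applies verbatim. Its opening assertion gives the equality of transcendental ranks $\rk\TransQY=\rk\TransQX=r$, so the hypothesis that $\rk\Tr_X$ be odd is exactly the statement that $r$ is odd. Part~\eqref{item1} of the theorem then yields at once that there exists no isometry between $\TransQY$ and $\TransQX$, which is the desired conclusion. In particular the finer case distinction between $p=2$ and $p>2$ encoded in part~\eqref{item2} plays no role here, since the parity obstruction for odd $r$ is unconditional.

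The only point requiring genuine care—and the step I expect to be the main obstacle—is confirming that $\gamma$ is an isogeny in the precise technical sense of Section~\ref{prelim}, so that $\gamma^\ast_\Tr\colon\TransQY\to\TransQX$ is the dilation with scale factor $p$ that feeds the Witt-theoretic argument behind Theorem~\ref{witt}. This compatibility is controlled by the geometry of the symplectic quotient: the pullback along $\gamma$ embeds $\Tr_Y$ into $\Tr_X$ and multiplies the intersection form by $p$, in agreement with the motivating computation of \cite[Proposition~2.5]{vanGeemen-Sarti} in the case $p=2$. Once this is verified, no further computation is needed, since the corollary follows formally from the parity part of the theorem.
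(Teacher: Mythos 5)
Your proposal is correct and coincides with the paper's primary proof: the authors state explicitly that Corollary~\ref{main} is a direct consequence of Theorem~\ref{witt}, more precisely of Proposition~\ref{isog}, once one knows (Section~\ref{sympl}) that the quotient construction yields a degree~$p$ isogeny in Inose's sense and that $\gamma^\ast_\Tr$ is a dilation with scale factor $p$ --- exactly the points you check. The only thing you do not reproduce is the paper's second, independent geometric proof in Section~\ref{symplnoniso}, which avoids Witt theory by comparing discriminants of the lattices $V$ and $W$ via the resolution diagram, but that alternative argument is not needed for the corollary to hold.
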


Note that the condition $\rk \Tr_{X}$ odd corresponds to the generic surface in the moduli space of K3 
surfaces with a symplectic automorphism of order $p$ (see Section~\ref{sympl}). 
This corollary is a direct consequence of Theorem~\ref{witt} since it is a special
case of Inose's construction. In Section~\ref{symplnoniso} we give a different and
completely self contained proof (the necessary background is given in Section~\ref{sympl}). This second proof is of independent interest since it uses deep
geometric properties of symplectic automorphisms on K3 surfaces: the key is to show
the existence of a  commutative diagram of blow-ups and contractions from which the properties of 
$\gamma^\ast_\Tr$ and consequently of the rational quadratic spaces $\TransQY$ and $\TransQX$  can be deduced by geometric
 arguments without using Witt theory. This diagram has been studied by Morrison~\cite{morrison} and van Geemen--Sarti~\cite{vanGeemen-Sarti} for $p=2$ and by Tan~\cite{tan} for $p=3,5$. To our knowledge, the construction of the diagram for $p=7$ is new.

To conclude this paper, we discuss in Section \ref{exos} some classical and interesting examples of 
isogenies.

{\bf Acknowledgement.} We thank Bert van Geemen, Xavier Roulleau and Matthias Sch\"utt for helpful discussions.

%%%%%%%%%%%%%%%%%%%%%%%%%%%%%%%

\section{Preliminaries}\label{prelim}

%%%%%%%%%%%%%%%%%%%%%%%%%%%%%%%

\subsection{Lattices}\label{lattices} 

A \emph{lattice} $L$ is a free $\IZ$-module of finite rank equipped with a nondegenerate integral quadratic form with integer values. The determinant of the bilinear form (computed with respect to any basis) is called the \emph{discriminant} of the lattice $L$ and is denoted $\disc L$.

If $M \subset L$ is a sublattice of the same rank, the quotient $L/M$ has finite order and we have
\begin{equation}\label{formule_disc}
    \disc M = \disc L \cdot [L:M]^2.
\end{equation}

A sublattice $M \subset L$ is \emph{primitive} if $L/M$ is free. Then $M \oplus M^\perp$ has finite index in~$L$ and
\begin{equation} \label{eq:disc-perp}
    \disc M \cdot \disc M^\perp = \disc L \cdot [L:M\oplus M^\perp]^2.
\end{equation}

For any lattice $L$, we denote by $L(n)$ the lattice $L$ whose quadratic form is multiplied by $n\in\IZ^\ast$ and by $L_\IQ$ the $\IQ$-vector space $L \otimes \IQ$ with the induced rational quadratic form.

\subsection{Classification of rational quadratic forms.}

We recall some classical material for which we refer to~\cite{scharlau}.
Two quadratic forms $\varphi$ and~$\varphi'$ with coefficients in a field~$\IK$ are \emph{equivalent} if there exists a linear isomorphism between them. Equivalently, their associated matrices $Q$ and $Q'$ in any bases are \emph{congruent}, \ie there exists an invertible matrix $A$ with coefficients in $\IK$
such that $Q'= A^\top Q A$. In particular, when these quadratic forms are nondegenerate this implies that 
\begin{equation}\label{trivial}
\det Q \equiv \det Q'\mod (\IQ^*)^2.
\end{equation}
This basic property will be very useful in the sequel to detect which dilations associated to isogenies cannot be related to an isometry.

Given $a_1,\ldots,a_n\in\IK$ we denote by $\langle a_1,\ldots,a_n\rangle$ the quadratic form
\begin{equation}\label{diagonal}
(x_1,\ldots,x_n)\mapsto a_1x_1^2+\cdots+a_nx_n^2.
\end{equation}
If the characteristic of $\IK$ is different from two, any equivalence class of a quadratic form
contains a representative in diagonal form (\ref{diagonal}).

Any regular (equivalently nondegenerate) quadratic form $\varphi$ decomposes as
$$
\varphi \cong \langle 1, -1 \rangle^{\oplus \ell} \oplus \varphi_a
$$
where $\ell\in\IN$ is uniquely determined and $\varphi_a$ is the anisotropic part of $\varphi$, well-defined
up to equivalence. Two regular quadratic forms $\varphi$ and $\psi$, not necessarily of the same dimension,
are \emph{Witt-equivalent} if their anisotropic parts $\varphi_a$ and $\psi_a$ are equivalent.
The Witt-equivalence class of $\varphi$, \resp $\langle a_1, \ldots, a_n \rangle$, is denoted
by $[\varphi]_W$, \resp $\langle a_1, \ldots, a_n \rangle_W$. 
The orthogonal sum operation $\oplus$ defines the group law of the \emph{Witt group} $W(\IK)$ of Witt-equivalence classes 
of regular quadratic forms on $\IK$.

The \emph{discriminant} of a rational quadratic form $\varphi$ of dimension $n$ is
$$
\Delta(\varphi) := (-1)^{\frac {n(n-1)} 2} \det(\varphi) \in \IK/(\IK^*)^2,
$$
it depends only on the Witt-equivalence class of~$\varphi$. For any $r \in \IQ^\ast$ and any prime number $q$, we denote by $\nu_q(r)$ the $q$-adic valuation of $r$. 
Writing $r = q^{\nu_q(r)} \frac st$
where $s$ and $t$ are integers prime to $q$, the residue of $r$ modulo $q$ is by definition
$$
\res_q(r) := \bar s \bar t^{-1} \in \IF_q^*
$$
and then we set
$$
\partial_q (r) := \begin{cases} 
                            0 & \text{if $\nu_q(r)$ is even} \\
                    \res_q(r) & \text{if $\nu_q(r)$ is odd.}
                  \end{cases}
$$
For any $q$, there exists a unique group homomorphism
$$
\bar \partial_q \colon W(\IQ) \rightarrow W(\IF_q)
$$
such that
$$
\bar\partial_q (\langle a_1,\ldots, a_n \rangle_W) = \langle \partial_q (a_1),\ldots,\partial_q(a_n) \rangle_W \quad \forall a_1,\ldots,a_n\in \IQ^*.
$$
We recall the following classical results of Witt theory:
\begin{proposition} \label{prop:QQ-equivalence}
Two regular rational quadratic forms $\varphi$ and $\psi$ are equivalent if and only if they have the same signature 
 over $\IR$ and if for every prime number $q$:
$$
\bar \partial_q([\varphi]_W) = \bar \partial_q([\psi]_W).
$$
\end{proposition}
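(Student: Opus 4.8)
The plan is to realize this statement as a reformulation of the Hasse--Minkowski local--global principle, phrased through the residue homomorphisms $\bar\partial_q$, and to deduce it from the structure of the Witt group $W(\IQ)$ recalled in~\cite{scharlau}. Throughout I read ``same signature over $\IR$'' as equality of the full signature $(r_+,r_-)$, so that it carries in particular the information that $\varphi$ and $\psi$ have the same rank. The forward implication is then immediate: an equivalence $\varphi\cong\psi$ is a congruence over $\IR$, hence preserves the signature by Sylvester's law of inertia, and it makes $\varphi$ and $\psi$ Witt-equivalent, so that $[\varphi]_W=[\psi]_W$ in $W(\IQ)$ and a fortiori $\bar\partial_q([\varphi]_W)=\bar\partial_q([\psi]_W)$ for every prime $q$.

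For the converse I would introduce $\theta:=\varphi\oplus(-\psi)$, where $-\psi$ is the form of $\psi$ multiplied by $-1$. Since $\psi\oplus(-\psi)$ is hyperbolic, $[-\psi]_W=-[\psi]_W$ and therefore $[\theta]_W=[\varphi]_W-[\psi]_W$. As each $\bar\partial_q$ is a group homomorphism, the hypothesis on the residues yields $\bar\partial_q([\theta]_W)=0$ for every prime~$q$. The central ingredient is the exactness of the Milnor sequence
\begin{equation*}
0\lra W(\IZ)\lra W(\IQ)\xrightarrow{\ \bigoplus_q\bar\partial_q\ }\bigoplus_{q}W(\IF_q)\lra 0,
\end{equation*}
whose middle map has kernel equal to the image of $W(\IZ)$; the vanishing of all residues thus places $[\theta]_W$ in this image.

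It then remains to extract the consequence of the real signature. The composite $W(\IZ)\to W(\IQ)\to W(\IR)\cong\IZ$ is the signature map and is injective, so any class in the image of $W(\IZ)$ is detected by its signature over $\IR$. Since the signature of $\theta$ is the difference of the signatures of $\varphi$ and $\psi$, it vanishes by hypothesis, and hence $[\theta]_W=0$, \ie $[\varphi]_W=[\psi]_W$. Finally, two regular rational forms sharing the same Witt class and the same rank are equivalent, because each Witt class possesses a unique anisotropic representative and both forms are recovered from it by adding the same number of hyperbolic planes $\langle 1,-1\rangle$, that number being fixed by the common rank. This gives $\varphi\cong\psi$.

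The only genuinely nontrivial input is the exactness of the displayed Milnor sequence together with the injectivity of the signature on $W(\IZ)$, both of which I would simply cite from~\cite{scharlau}; the remainder is formal bookkeeping in $W(\IQ)$. The one point to watch is that the residue conditions by themselves control only the Witt class, so the equality of ranks---encoded in the agreement of the full signatures---is indispensable for upgrading Witt-equivalence to an honest equivalence of forms.
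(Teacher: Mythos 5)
Your proposal is correct, and in fact the paper offers no proof of this proposition at all: it is recalled as a classical fact of Witt theory with a blanket reference to~\cite{scharlau}, so there is no internal argument to compare against. What you give is essentially the standard derivation underlying that citation: form $\theta:=\varphi\oplus(-\psi)$, use exactness of Milnor's sequence
$$
0\lra W(\IZ)\lra W(\IQ)\lra\bigoplus_{q}W(\IF_q)\lra 0
$$
to place $[\theta]_W$ in the image of $W(\IZ)$, kill that class by the signature (the composite $W(\IZ)\to W(\IQ)\to W(\IR)\cong\IZ$ sends the generator $\langle 1\rangle_W$ to $1$, hence is injective), and then upgrade equality of Witt classes to equivalence of forms via Witt decomposition and the common rank. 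All steps check out, including the forward direction and the final bookkeeping with the anisotropic kernel. Two points you handled that deserve emphasis: first, your reading of ``same signature over $\IR$'' as the full pair $(r_+,r_-)$ is not pedantry but indispensable --- with only $r_+-r_-$ the statement is false, since $\langle 1,-1\rangle$ and $\langle 1,-1,1,-1\rangle$ have zero signature and vanishing residues at every prime yet are inequivalent; second, the sequence you invoke must include the prime $q=2$, where $\bar\partial_q$ takes values in $W(\IF_2)\cong\IZ/2\IZ$ and records $\nu_2(\det\varphi)\bmod 2$ --- this is exactly the convention of the paper's $\bar\partial_2$ and the form in which Milnor's theorem is stated in~\cite{scharlau}, so your citation does cover it. In short: a correct, complete proof of a statement the authors chose to quote rather than prove, resting on precisely the two nontrivial inputs you isolate.
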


\begin{proposition} \label{prop:finite-field}
 Two regular quadratic forms on a finite field are Witt-equivalent if and only if their dimensions have
same parity and their discriminants are equal.
\end{proposition}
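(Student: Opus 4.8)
The plan is to reduce the whole statement to the structure of anisotropic forms over a finite field $\IF_q$, where (as in the diagonalizable setting recalled above) I assume the characteristic odd, so that the discriminant in $\IF_q^*/(\IF_q^*)^2$ is the relevant invariant and every form diagonalizes. The one nontrivial input is a counting argument, which I would isolate first: over $\IF_q$, every nondegenerate form of dimension at least $3$ is isotropic, equivalently every anisotropic form has dimension at most $2$. It suffices to treat a ternary form $\langle a_1,a_2,a_3\rangle$. As $x$ ranges over $\IF_q$, the set $\{a_1x^2\}$ has $(q+1)/2$ elements, and likewise the set $\{-a_3-a_2y^2\}$ has $(q+1)/2$ elements; since $(q+1)/2+(q+1)/2=q+1>q$, the two subsets of $\IF_q$ must meet, and a common value produces a nontrivial zero at $(x,y,1)$. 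This pigeonhole estimate is the single place where finiteness of the field is genuinely used, and it is the heart of the matter.

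Next I would record two consequences of the same counting. Running the argument with a nonzero target shows that any nondegenerate form of dimension $\geq 2$ represents every element of $\IF_q^*$; in particular it represents $1$, so splitting off a $\langle 1\rangle$ repeatedly gives $\varphi\cong\langle 1,\dots,1,d\rangle$ with $d\equiv\det\varphi$ modulo squares. Hence two forms of equal dimension are equivalent exactly when their discriminants agree. Combined with the isotropy statement, the anisotropic part $\varphi_a$ of any form has dimension $0$, $1$ or $2$: it is trivial (with $\Delta=1$, a square), a line $\langle d\rangle$ (with $\Delta=d$), or an anisotropic plane $\langle a,b\rangle$, which is anisotropic precisely when $-ab$ is a non-square, i.e. when $\Delta=-ab$ is a non-square. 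Since the hyperbolic plane has discriminant $1$, one also has $\dim\varphi\equiv\dim\varphi_a\pmod 2$.

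Finally I would assemble the proposition. For the forward implication, Witt-equivalence of $\varphi$ and $\psi$ means $\varphi_a\cong\psi_a$; this forces $\dim\varphi_a=\dim\psi_a$, hence equal parity of $\dim\varphi$ and $\dim\psi$, while equality of discriminants is the Witt-invariance of $\Delta$ already noted in the text. For the converse, equal parity yields $\dim\varphi_a\equiv\dim\psi_a\pmod 2$ with both at most $2$, and $\Delta(\varphi_a)=\Delta(\varphi)=\Delta(\psi)=\Delta(\psi_a)$. If the common parity is odd, both anisotropic parts are lines of equal discriminant, hence equivalent. If it is even, each anisotropic part is either trivial or an anisotropic plane, and the discriminant (square versus non-square) forces the two to share the same dimension; the classification of the previous step then makes them equivalent. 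In every case $\varphi_a\cong\psi_a$, that is, $\varphi$ and $\psi$ are Witt-equivalent. I expect the counting lemma to be the only real obstacle, the remainder being bookkeeping with parities and discriminants.
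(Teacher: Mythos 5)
The paper offers no proof of this proposition at all: it is stated as a classical fact and referred to \cite{scharlau}. Your argument is the standard self-contained proof and it is correct in odd characteristic. The pigeonhole count showing that every nondegenerate ternary form over $\IF_q$ is isotropic (and that every binary nondegenerate form represents all of $\IF_q^*$) is indeed the only substantive input; from it the classification of forms of fixed dimension by discriminant, the bound $\dim\varphi_a\le 2$, and the parity-plus-discriminant bookkeeping all follow exactly as you describe. The case analysis in the converse is complete: odd parity forces both anisotropic parts to be lines with the same discriminant, and even parity forces them both to be trivial or both to be anisotropic planes according to whether the common discriminant is a square, using that the hyperbolic plane has discriminant $1$ under the convention $\Delta(\varphi)=(-1)^{n(n-1)/2}\det(\varphi)$.

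One caveat: you explicitly restrict to odd characteristic, but the proposition is stated for an arbitrary finite field, and the paper actually needs the case $q=2$ --- in the proof of Proposition~\ref{due} it applies this proposition to $\bar\partial_2([\varphi]_W)$ and $\bar\partial_2([\psi]_W)$ in $W(\IF_2)$. Over $\IF_2$ diagonalization fails and every element is a square, so the discriminant condition is vacuous and the statement reduces to $W(\IF_2)\cong\IZ/2\IZ$ detected by dimension mod $2$; this is easy but requires a separate sentence rather than your counting argument. You should either add that sentence or note explicitly that the characteristic-two case is handled differently.
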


%%%%%%%%%%%%%%%%%%%%%%%%%%%%%%%%%%%%%%%%%%%%%%%%%%%%%%%%%%%%%%%%�

\subsection{Hodge theory for products of K3 surfaces}\label{hodge}

Let $M$ be a smooth complex compact K\"ahler manifold. Its cohomology with complex coefficients admits a Hodge decomposition
$$
\HH^k(M,\IC)=\bigoplus_{p+q=k} \HH^{p,q}(M)\quad\forall k
$$
where $\HH^{p,q}(M)$ is the space of classes of differential $(p,q)$-forms. Every smooth codimension~$k$ subvariety $Z\subset X$ admits a fundamental class $[Z]$ in cohomology and it is
easy to see that $[Z]\in\HH^{k,k}(M)$. Conversely, denote
$$
\Hdg^k(M):=\HH^{2k}(M,\IQ)\cap \HH^{k,k}(M)
$$
the group of Hodge classes of degree $2k$ on $M$. The Hodge conjecture predicts that
if $M$ is projective, then any Hodge class is a linear combination with rational coefficients of cohomology classes of algebraic subvarieties of $M$.

Let $X$ be a projective K3 surface. The intersection product, denoted $\langle-,-\rangle_X$ in the sequel, gives its second cohomology space $\HH^2(X,\IZ)$ the structure of an even unimodular lattice of rank $22$ and signature $(3,19)$, isometric to $U^3\oplus E_8^2$ where $U$ is the hyperbolic plane and $E_8$ is the unique even unimodular negative definite lattice of rank~$8$. The $\IQ$-vector space $\HH^2(X ,\IQ)=\HH^2(X,\IZ)\otimes \IQ$ carries a weight two Hodge structure
$$
\HH^2(X,\IQ)\otimes \IC=\HH^2(X,\IC)=\HH^{2,0}(X)\oplus \HH^{1,1}(X)\oplus \HH^{0,2}(X)
$$ 
where the space $\HH^{2,0}(X)$ of global holomorphic $2$-forms is one-dimensional,
$\HH^{1,1}(X)$ is $20$-dimensional and $\HH^{0,2}(X)=\overline{\HH^{2,0}(X)}$.
Inside $\HH^2(X,\IZ)$ there are two distinguished sublattices: the {\it N\'eron Severi lattice} $\NS(X):=\HH^{1,1}(X)\cap\HH^2(X,\IZ)$ and the \emph{transcendental lattice} $\Tr_X:=\NS(X)^\perp$. Since $X$ is projective, its hyperplane section is an integral $1$-codimensional cycle hence $\rho(X):=\rk\NS(X)\geq 1$. By the Hodge index theorem $\NS(X)$ has signature $(1,\rho(X)-1)$.
The rational transcendental lattice $\Tr_{X,\IQ}:=\Tr_X\otimes \IQ$ admits a weight two Hodge structure $\Tr_{X,\IC}=\Tr^{2,0}_X\oplus\Tr^{1,1}_X\oplus \Tr^{0,2}_X$ given by $\Tr_X^{p,q}:=\HH^{p,q}(X)\cap \Tr_{X,\IC}$,
where $\Tr_X^{2,0}$ and $\Tr_X^{0,2}$ are one-dimensional and $\Tr_X^{1,1}$ contains no nonzero rational class. It follows easily that $\Tr_{X,\IQ}$ is an irreducible
Hodge structure (see \eg \cite[Lemma~1.7]{vanGeemenMichigan}).

Assume that $M=Y\times X$ is a product of two projective K3 surfaces $X$ and~$Y$.
By Lefschetz theorem on hyperplane sections, $\Hdg^1(M)=\NS(M)_\IQ$ so it contains only algebraic classes and by the Hard Lefschetz theorem $\Hdg^3(M)$ thus contains also only algebraic classes. So Hodge conjecture remains
open for $\Hdg^2(M)$. By K\"unneth decomposition, the question is non trivial only for classes living in $\HH^{2,2}(M)\cap (\HH^2(Y,\IQ)\otimes\HH^2(X,\IQ))$. Since $\HH^2(Y,\IQ)$ is unimodular, we have 
$$
\HH^2(Y,\IQ)\otimes\HH^2(X,\IQ)\cong \HH^2(Y,\IQ)^\vee\otimes\HH^2(X,\IQ)\cong\Hom(\HH^2(Y,\IQ),\HH^2(X,\IQ)).
$$
Denoting by $\pi_Y$, \resp $\pi_X$, the projection of $Y\times X$ onto $Y$, \resp $X$,
the last isomorphism identifies a class $[Z]\in \HH^2(Y,\IQ)\otimes\HH^2(X,\IQ)$
with the correspondence $x\mapsto \pi_{X\ast}(\pi_Y^\ast x\cdot[Z])$.
It is easy to check that this correspondence is compatible with the Hodge structures if and only if $[Z]\in \HH^{2,2}(M)$.

Decomposing $\HH^2(X,\IQ)=\NS(X)_{\IQ}\oplus \TransQX$, and similarly for $Y$,
since the Hodge structure of the rational N\'eron--Severi lattice has weight one and those of the rational transcendental lattice is irreducible of weight two, there
are no nonzero homomorphisms of Hodge structures between them. It follows that any
morphism of Hodge structure $\phi\in\Hom_\Hdg(\HH^2(Y,\IQ),\HH^2(X,\IQ))$ decomposes as $\phi=\phi_\NS+\phi_\Tr$ where $\phi_\NS\in \Hom_\Hdg(\NS(Y)_\IQ, \NS(X)_\IQ)$
and $\phi_\Tr\in \Hom_\Hdg(\TransQY, \TransQX)$ is either zero or an
isomorphism. The morphism $\phi$ is the correspondence by a class $[Z]\in\HH^{2,2}(M)\cap(\HH^2(Y,\IQ)\otimes\HH^2(X,\IQ))$. The morphism $\phi_\NS$ 
is the correspondence by the component of $[Z]$ in $\NS(Y)_\IQ\otimes\NS(X)_\IQ$, which is automatically algebraic. 
Finally the study of the algebraic cycles in $\Hdg^2(Y\times X)$ reduces to the study of morphisms of rational Hodge structures between $\TransQY$ and $\TransQX$ that come from algebraic correspondences. Conversely, any such morphism extends to an element of $\Hom_\Hdg(\HH^2(Y,\IQ),\HH^2(X,\IQ))$ by projection and inclusion. We can thus formulate the Hodge conjecture for a product of complex projective K3 surfaces $Y\times X$ as follows: Is every morphism in $\Hom_{\Hdg}(\TransQY,\TransQX)$ given by an algebraic correspondence?

%%%%%%%%%%%%%%%%%%%%%%%%%%%%%%%%%%%%%%%%%%%%%

\subsection{Correspondences induced by isogenies}\label{corresp} 

Let $\gamma\colon X\dashrightarrow Y$ be an isogeny between two complex projective K3 surfaces $X$ and $Y$, \ie a rational map of finite degree $n$. By elimination of indeterminacy there exists a birational morphism $\beta\colon \widetilde{X}\to X$ and a morphism $\widetilde\gamma\colon\widetilde X\to Y$ which resolve the indeterminacies of $\gamma$, \ie $\widetilde{\gamma}=\gamma\circ\beta$:
\begin{equation}\label{resol}
\xymatrix{\widetilde X\ar[rd]^{\widetilde\gamma}\ar[d]_\beta\\ X\ar@{-->}[r]_\gamma & Y}
\end{equation}
The composition of the correspondences by the graphs $\Gamma_\beta$ of $\beta$ and $\Gamma_{\widetilde\gamma}$ of $\widetilde\gamma$ produce a morphism
$$
\gamma^\ast:=[\Gamma_{\beta}]_\ast\circ[\Gamma_{\widetilde\gamma}]^\ast\colon \HH^\ast(Y,\IZ)\to\HH^\ast(X,\IZ)
$$ 
which is easily identified with the
correspondence $[\Gamma]^\ast$ by the graph
$$
\Gamma:=\{(\beta(x),\widetilde\gamma(x))\,|\,x\in\widetilde X\}\subset X\times Y
$$
which gives
an algebraic class in $\Hdg^2(Y\times X)$.
Symmetrically, the composition of correspondences
$$
\gamma_\ast:=[\Gamma_{\widetilde\gamma}]_\ast\circ[\Gamma_{\beta}]^\ast\colon \HH^\ast(X,\IZ)\to\HH^\ast(Y,\IZ)
$$
is the correspondence $[\Gamma]_\ast$. Once restricted to the second cohomology groups, the morphisms $\gamma_\ast$ and $\gamma^\ast$ are adjoint to each other for the intersection product.
 
Consider the composed morphism
$$
\gamma_\ast\gamma^\ast=[\Gamma_{\widetilde\gamma}]_\ast\circ[\Gamma_{\beta}]^\ast\circ[\Gamma_{\beta}]_\ast\circ[\Gamma_{\widetilde\gamma}]^\ast=\widetilde\gamma_\ast\beta^\ast\beta_\ast\widetilde\gamma^\ast.
$$
Note that $\widetilde\gamma$ is generically finite of degree $n$ and that since $\beta$ is a sequence of blow-ups, $\beta^\ast\colon \HH^2(X,\IZ)\to\HH^2(\widetilde X,\IZ)$ is an inclusion. To study the restriction of 
$\gamma_\ast\gamma^\ast$ to $\Tr_Y$ we need a geometric description of the restriction of $\gamma^\ast$ to $\Tr_Y$, which is explained by Inose~\cite[\S 1]{inose} as follows. Fix a set of curves $C_i$, \resp $D_j$, whose classes generate $\NS(\widetilde X)$, \resp $\NS(Y)$, and let $N$ be a reducible curve in~$Y$ which contains in its support all the images $\widetilde{\gamma}(C_i)$ and the curves~$D_j$. Take an element $t\in\Tr_Y$
and denote by $\tau\in \HH_2(Y,\IZ)$ its Poincar\'e dual. By assumption we have $\int_{N_i} t=0$ for any irreducible component $N_i$ of~$N$, hence by a standard transversality argument we can take the curve $N$ to meet $\tau$ transversally, so that here $\tau\cap N=\emptyset$. It follows from the definition that $\gamma^\ast(t)$ is the Poincar\'e dual of $\beta(\widetilde\gamma^{-1}(\tau))$. As a consequence, since $\widetilde\gamma$ is generically finite of degree $n$ we have
$$
(\gamma_\ast\gamma^\ast)_{|\Tr_Y}=n\,\id_{\Tr_Y}
$$
from which follows by adjunction that
$$
\langle\gamma_\Tr^*(x),\gamma_\Tr^*(y)\rangle_X=\langle x,\gamma_{\Tr\ast}\gamma_\Tr^*(y)\rangle_Y=n\,\langle x,y\rangle_Y \qquad\forall x,y\in\Tr_Y.
$$
In particular $\rk(\Tr_Y)=\rk(\Tr_X)$ and $\gamma_\Tr^*$ is a dilation with scale factor $n$. We denote also by $\gamma^\ast_\Tr\colon\TransQY\to\TransQX$ its $\IQ$-linear
extension, which is thus an isomorphism of rational Hodge structures induced
by an algebraic cycle, but not an isometry. So the Hodge conjecture is verified for $\gamma^\ast_\Tr$ even if it does not satisfy the hypothesis of the Mukai--Nikulin theorem. Our objective in this paper is to study those isogenies between projective K3 surfaces whose transcendental rational Hodge structures are not isometric at all.

%%%%%%%%%%%%%%%%%%%%%%%%%%%%%%%%%%%%%%%%%%%%%%%%%%%%%%%%%%%%%%%%%

\section{Proof of Theorem~\ref{witt}}\label{isoge}

The proof of Theorem~\ref{witt} is decomposed into four slightly more general Propositions of independent interest: assertion~(\ref{item1}) is proven in Proposition~\ref{isog}, assertion~(\ref{item2}) is proven in Proposition~\ref{subsp} and the equivalences of assertion ~(\ref{item2}) with either assertion~(\ref{item3}) or assertion (\ref{item4}) is proven in Propositions~\ref{due} \and \ref{primo}.

\begin{proposition}\label{isog}
Let $\gamma\colon X\dashrightarrow Y$ be an isogeny of degree $n$ between two complex projective K3 surfaces. Assume that $n$ is not a square in $\IQ^*$ and that $\rk\Tr_X$ is odd. Then there exist no isometry between $\TransQY$ and $\TransQX$.
\end{proposition}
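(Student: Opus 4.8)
The plan is to extract a numerical obstruction from the determinant of the rational quadratic form, which by (\ref{trivial}) is a congruence invariant modulo squares and is therefore insensitive to the choice of Gram matrix. From the discussion in Section~\ref{corresp}, the morphism $\gamma^\ast_\Tr$ is a dilation with scale factor $n$, which means precisely that it is an isometry between $\TransQY(n)$ and $\TransQX$. Setting $r := \rk \Tr_Y = \rk \Tr_X$ (these coincide by Section~\ref{corresp}), I would first record that if $Q$ is a Gram matrix of $\TransQY$, then $nQ$ is a Gram matrix of $\TransQY(n)$, so that
$$
\det \bigl(\TransQY(n)\bigr) = n^r \det \Tr_Y.
$$
Applying the congruence (\ref{trivial}) to the isometry $\gamma^\ast_\Tr$ then yields
$$
n^r \det \Tr_Y \equiv \det \Tr_X \mod (\IQ^\ast)^2.
$$

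Next I would argue by contradiction. Suppose there exists an isometry between $\TransQY$ and $\TransQX$. Applying (\ref{trivial}) once more, this time to the supposed isometry, gives
$$
\det \Tr_Y \equiv \det \Tr_X \mod (\IQ^\ast)^2.
$$
Combining the two congruences and cancelling $\det \Tr_X$ (which is a nonzero, hence invertible, element of $\IQ^\ast/(\IQ^\ast)^2$ since the form is nondegenerate) produces $n^r \equiv 1 \mod (\IQ^\ast)^2$, that is, $n^r$ is a square in $\IQ^\ast$.

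Finally I would invoke the parity hypothesis. Since $r$ is odd, one can write $n^r = n \cdot \bigl(n^{(r-1)/2}\bigr)^2$, so $n^r$ is a square in $\IQ^\ast$ if and only if $n$ is a square in $\IQ^\ast$. This contradicts the assumption that $n$ is not a square, completing the proof. There is no genuine obstacle in this argument: the only delicate point is that the determinant is merely well-defined modulo squares, but this is exactly what makes it the correct invariant to compare under rational equivalence, and the oddness of $r$ is precisely the ingredient that transfers the square condition from $n^r$ back to $n$.
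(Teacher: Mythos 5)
Your proof is correct and follows essentially the same route as the paper: both arguments reduce to the observation that the dilation forces $n^r \det \Tr_Y \equiv \det \Tr_X \bmod (\IQ^\ast)^2$, while a hypothetical isometry forces $\det \Tr_Y \equiv \det \Tr_X \bmod (\IQ^\ast)^2$, so $n^r$ would be a square, which is impossible for odd $r$ and non-square $n$. The only cosmetic difference is that the paper obtains the first congruence from the integral index formula $\disc \Tr_Y(n) = \disc \Tr_X \cdot [\Tr_X:\Tr_Y(n)]^2$, whereas you apply the congruence (\ref{trivial}) directly to the rational isometry $\TransQY(n)\cong\TransQX$; both are valid.
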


\begin{proof}
As explained in Section \ref{corresp}, the morphism $\gamma^\ast_\Tr\colon\Tr_Y\to\Tr_X$ is a dilation with scale factor $n$, hence
$\gamma^\ast_\Tr\colon\Tr_Y(n)\hookrightarrow\Tr_X$ is an embedding of lattices of the same rank~$r$.
It follows that
$$
[\Tr_X:\Tr_Y(n)]^2=\frac{\disc \Tr_Y(n)}{\disc\Tr_X}=n^r \frac{\disc \Tr_Y}{\disc \Tr_X}.
$$
Assuming that $\TransQY$ is isometric to $\TransQX$, equation~\eqref{trivial} gives:
$$
\disc \Tr_Y \equiv \disc \Tr_X \mod (\IQ^*)^2.
$$
Combining these two formulas we deduce that $n^r$ is a square in $\IQ^*$. Since by hypothesis $n$ is not a square in $\IQ^*$, this can happen only if $r$ is even.
\end{proof}

\begin{proposition}\label{subsp}
Let $\gamma\colon X\dashrightarrow Y$ be an isogeny of degree $n$ between two complex projective K3 surfaces. Then there exists an isometry between $\TransQY$ and $\TransQX$ if and only if $\TransQY$ is isometric to $\TransQY(n)$.
\end{proposition}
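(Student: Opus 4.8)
The plan is to observe that the dilation property of $\gamma^\ast_\Tr$ established in Section~\ref{corresp} already provides, for free, an isometry between $\TransQY(n)$ and $\TransQX$, after which the proposition follows purely formally from the transitivity of the isometry relation on rational quadratic forms. Concretely, the identity $\langle\gamma_\Tr^*(x),\gamma_\Tr^*(y)\rangle_X=n\,\langle x,y\rangle_Y$ says precisely that the $\IQ$-linear extension $\gamma^\ast_\Tr\colon\TransQY\to\TransQX$ carries the scaled form $n\langle-,-\rangle_Y$ to the form $\langle-,-\rangle_X$. Since $\gamma^\ast_\Tr$ is an isomorphism of $\IQ$-vector spaces (it is injective and of full rank $r$), this exhibits an isometry
$$
\TransQY(n)\cong\TransQX
$$
valid with no hypothesis on $n$ or on $r$.

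Granting this, I would treat the two directions in turn. If an isometry $\TransQY\cong\TransQX$ exists, composing it with the isometry $\TransQX\cong\TransQY(n)$ yields an isometry $\TransQY\cong\TransQY(n)$. Conversely, if $\TransQY\cong\TransQY(n)$, composing with $\TransQY(n)\cong\TransQX$ produces the desired isometry $\TransQY\cong\TransQX$.

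I do not expect any genuine obstacle here: the only nontrivial ingredient is the scaling identity for $\gamma^\ast_\Tr$, which comes from Inose's geometric description recalled in Section~\ref{corresp}, and the rest is just transitivity of congruence of quadratic forms. The single point meriting a word of care is that passing from the integral dilation to its $\IQ$-linear extension does not alter the isometry class of the scaled form, but this is immediate since scaling the quadratic form commutes with extension of scalars to $\IQ$.
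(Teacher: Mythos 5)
Your proposal is correct and follows essentially the same route as the paper: both observe that the dilation identity makes $\gamma^\ast_\Tr$ an isometry $\TransQY(n)\cong\TransQX$, and then deduce both directions by composing isometries. No gaps.
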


\begin{proof}
As explained in Section \ref{corresp}, the morphism $\gamma^\ast_\Tr\colon\Tr_Y\to\Tr_X$ is a dilation with scale factor $n$, hence
it is an isometry between $\TransQY(n)$ and~$\TransQX$ (that we still denote $\gamma^\ast_\Tr$ by abuse of notation):
$$
\xymatrix{\TransQY\ar[rr]^{\gamma^\ast_\Tr}\ar[dr]_{\id}& & \TransQX\\
& \TransQY(n)\ar[ur]_{\gamma^\ast_\Tr}}
$$
If there exists an isometry between $\TransQY$ and $\TransQY(n)$, then composed with $\gamma^\ast_\Tr$ it gives an isometry between $\TransQY$ and $\TransQX$, and \viceversa.
\end{proof}

\begin{remark}\label{sym}
The statement is symmetric in $X$
and $Y$, it is equivalent to ``$\TransQX(n)$ is isometric to 
$\TransQX$''. Indeed, $\TransQY(n)\cong \TransQX$ if and only if ${\TransQY\cong \TransQX(n)}$ since $\TransQX(n^2)\cong\TransQX$.
\end{remark}

For any regular rational quadratic form $\varphi$ and any prime number $q$, the parity of the $q$-adic valuation of the determinant of~$\varphi$ is an invariant of the Witt-equivalence class of~$\varphi$. We thus define
$$
\delta_q\colon W(\IQ)\to \IZ/2\IZ,\quad\delta_q(\varphi)= \nu_q(\det(\varphi)) \mod 2.
$$
In particular, expressions like $s^{\delta_q(\varphi)}$, with $s \in \IK^*$, are well defined in $\IK^*/(\IK^*)^2$.
Recall that for any rational quadratic form
 $\varphi = \langle a_1, \ldots, a_n \rangle$ and any $m\in\IZ^\ast$, we put $\varphi(m) := \langle m a_1, \ldots, m a_n \rangle$.
The following proposition proves assertion~(\ref{item2}) of Theorem~\ref{witt} by taking $\varphi$ equal to the rational quadratic form of $\TransQY$.

\begin{proposition}\label{due}
Let $\varphi$ be a regular rational quadratic form of dimension~$n$. Then $\varphi$~and~$\varphi(2)$ are equivalent if and only if $n$ is even and
 for every prime number~$q$ congruent to $3$ or $5$ modulo $8$, one has $\delta_q(\varphi)$=0.
\end{proposition}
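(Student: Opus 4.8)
The plan is to apply Proposition~\ref{prop:QQ-equivalence} to the pair $\varphi$ and $\varphi(2)$. First I would observe that multiplying a diagonal form by the positive number~$2$ preserves the sign of every entry, so $\varphi$ and $\varphi(2)$ always have the same signature over $\IR$; the real condition is therefore automatic, and the whole question reduces to comparing the residues $\bar\partial_q([\varphi]_W)$ and $\bar\partial_q([\varphi(2)]_W)$ in $W(\IF_q)$ for every prime $q$. Fixing a diagonal representative $\varphi\cong\langle a_1,\ldots,a_n\rangle$, I would set $m_q:=\#\{i:\nu_q(a_i)\text{ odd}\}$ and record that, since $\det\varphi=a_1\cdots a_n$, one has $m_q\equiv\nu_q(\det\varphi)=\delta_q(\varphi)\pmod 2$. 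The strategy is then to compute both residue classes prime by prime and to use Proposition~\ref{prop:finite-field} to decide equality.

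For an odd prime $q$ the key point is that $\nu_q(2)=0$, so $\partial_q(2a_i)$ vanishes exactly when $\partial_q(a_i)$ does, and on the surviving indices $\res_q(2a_i)=\bar 2\cdot\res_q(a_i)$. Hence $\bar\partial_q([\varphi(2)]_W)$ is represented by a form of the \emph{same} dimension $m_q$ as $\bar\partial_q([\varphi]_W)$, with determinant multiplied by $\bar 2^{\,m_q}$. By Proposition~\ref{prop:finite-field} the two residues agree if and only if $\bar 2^{\,m_q}$ is a square in $\IF_q^*$, i.e. if and only if $m_q$ is even or $2$ is a square modulo $q$. Invoking the second supplement to quadratic reciprocity ($2$ is a square modulo $q$ precisely when $q\equiv\pm1\pmod 8$), this condition can fail only for $q\equiv 3,5\pmod 8$, and for such $q$ it fails exactly when $m_q$ is odd, that is when $\delta_q(\varphi)\neq 0$.

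The prime $q=2$ requires separate, and slightly more delicate, handling, and I expect it to be the main obstacle. Here $\nu_2(2a_i)=\nu_2(a_i)+1$, so passing from $\varphi$ to $\varphi(2)$ \emph{swaps} the parity of every $2$-adic valuation: the number of surviving indices jumps from $m_2$ to $n-m_2$. Since $\IF_2^*=\{1\}$, the discriminant in $\IF_2^*/(\IF_2^*)^2$ is trivial, so Proposition~\ref{prop:finite-field} says that $\bar\partial_2([\varphi]_W)$ and $\bar\partial_2([\varphi(2)]_W)$ coincide if and only if $m_2$ and $n-m_2$ have the same parity, i.e. if and only if $n$ is even. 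The subtlety lies in correctly reading the residue map and the role of the characteristic-$2$ discriminant; once this is in place the $q=2$ comparison produces exactly the parity constraint on $n$.

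Finally I would assemble the pieces: by Proposition~\ref{prop:QQ-equivalence}, $\varphi\cong\varphi(2)$ holds if and only if the residue classes agree at every prime. The computation at $q=2$ contributes the condition that $n$ be even; the computation at odd primes shows that agreement is automatic unless $q\equiv 3,5\pmod 8$, in which case it is equivalent to $\delta_q(\varphi)=0$. Together these are precisely the conditions in the statement, which completes the proof.
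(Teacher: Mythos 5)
Your proposal is correct and follows essentially the same route as the paper: reduce to Proposition~\ref{prop:QQ-equivalence}, note the signature is unchanged, compare the residue forms at each prime via Proposition~\ref{prop:finite-field}, with $q=2$ yielding the parity condition on $n$ (the two residue forms have dimensions $m_2$ and $n-m_2$) and the odd primes yielding the condition $\delta_q(\varphi)=0$ exactly when $2$ is a nonsquare mod $q$, i.e.\ $q\equiv 3,5\pmod 8$ by the second supplement. The only cosmetic difference is that the paper first normalizes the diagonal entries into odd square-free integers and twice odd square-free integers, whereas you work with an arbitrary diagonal representative and count indices of odd $q$-adic valuation; the computations are the same.
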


\begin{proof}
We can assume, up to equivalence, that 
$$
\varphi = \langle a_1,\ldots, a_{m_1}, 2\,b_1, \ldots, 2 \, b_{m_2} \rangle, 
$$
where $m_1 + m_2 = n$ and $a_i, b_j$ are square-free odd integers. Then $\varphi(2)$ is equivalent to
$$
 \psi := \langle 2\, a_1,\ldots, 2\, a_{m_1}, b_1 \ldots, b_{m_2} \rangle.
$$
Clearly, $\varphi$ and $\psi$ have the same signature. Moreover,
$$
\bar\partial_2 ([\varphi]_W) = \langle \bar 1 \rangle_W^{\oplus m_2} \quad \text{and} \quad
\bar\partial_2 ([\psi]_W) = \langle \bar 1 \rangle_W^{\oplus m_1};
$$
hence, by Proposition~\ref{prop:finite-field} we have that $\bar\partial_2 ([\varphi]_W)$ is Witt-equivalent to $\bar \partial_2 ([\psi]_W)$ if and only if $m_1$ and $m_2$ have the same parity, or equivalently if $n$ is even. 

Let $q$ be an odd prime number. Obviously, $\bar \partial_q([\varphi]_W)$ and $\bar \partial_q([\psi]_W)$ have the same dimension since $\partial_q(2\,r)=2\partial_q(r)$ for all $r \in \IQ^*$. Their discriminants are
\begin{align*}
\Delta(\bar\partial_q ([\varphi]_W))&=(-1)^{\frac{n(n-1)}{2}}2^{n_1}\det(\bar\partial_q(\langle a_1,\ldots, a_{m_1},b_1\ldots, b_{m_2}  \rangle))\\
\Delta(\bar\partial_q ([\psi]_W))&=(-1)^{\frac{n(n-1)}{2}}2^{n_2}\det(\bar\partial_q(\langle a_1,\ldots, a_{m_1},b_1\ldots, b_{m_2}  \rangle))
\end{align*}
where $n_1=\nu_q\left(\prod_{j=1}^{m_2}b_j\right)$ and $n_2=\nu_q\left(\prod_{i=1}^{m_1}a_i\right)$.
These two discriminants are equal in $\IF_q^*/(\IF_q^*)^2$ if and only if $2^{n_1-n_2}$ is a square in $\IF_q$. If $q\equiv\pm 1\mod 8$ this is true by
the second supplement to the law of quadratic reciprocity.
Otherwise $q$ is congruent to $2$ or $3$ modulo $8$ and $2^{n_1-n_2}$ is a square
in $\IF_q$ if and only if $n_1-n_2$ is even, or equivalently if $\delta_q(\varphi)=n_1+n_2\mod 2\equiv 0 \mod 2$. 

Under these conditions $\bar\partial_q ([\varphi]_W)$ and $\bar\partial_q ([\psi]_W)$ are Witt-equivalent for any $q$ by Proposition~\ref{prop:finite-field}, hence $\varphi$ and $\varphi(2)$ are
equivalent by Proposition~\ref{prop:QQ-equivalence}. 
\end{proof}

\begin{proposition}\label{primo}
Let $\varphi$ be a regular rational quadratic form of dimension $n$ and let $p >2$ be a prime number. Then $\varphi$ and $\varphi(p)$ are equivalent if and only if $n$ is even and the following conditions are satisfied
\begin{enumerate}
\item for every prime number $q >2$, $q \neq p$, such that $p$ is not a square in $\IF_q$, one has $\delta_q(\varphi)=0$;
\item the following equation holds in $\IF_p^*/(\IF_p^*)^2$:
$$
\res_p (\det(\varphi)) = (-1)^{\frac{n^2-n}{2} + \delta_p(\varphi)}.
$$
\end{enumerate}
\end{proposition}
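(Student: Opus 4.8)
The plan is to imitate the proof of Proposition~\ref{due}, replacing the prime $2$ by the odd prime $p$ and reducing everything, via Propositions~\ref{prop:QQ-equivalence} and~\ref{prop:finite-field}, to a prime-by-prime comparison of residue forms. First I would diagonalize $\varphi$ according to the parity of the $p$-adic valuation of its entries, writing up to equivalence
$$
\varphi = \langle a_1, \ldots, a_{m_1}, p\, b_1, \ldots, p\, b_{m_2} \rangle,
$$
with $m_1 + m_2 = n$ and all $a_i, b_j$ prime to $p$. Since $p^2$ is a square, $\varphi(p)$ is then equivalent to
$$
\psi := \langle p\, a_1, \ldots, p\, a_{m_1}, b_1, \ldots, b_{m_2} \rangle.
$$
As $p>0$, multiplication by $p$ preserves signs, so $\varphi$ and $\psi$ share the same signature over $\IR$; by Proposition~\ref{prop:QQ-equivalence} the equivalence of $\varphi$ and $\varphi(p)$ is therefore equivalent to the equalities $\bar\partial_q([\varphi]_W) = \bar\partial_q([\psi]_W)$ for all primes $q$.

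For a prime $q \neq p$ (including $q=2$), multiplication by $p$ leaves the $q$-adic valuation unchanged and only rescales residues by $\bar p \in \IF_q^*$, so $\bar\partial_q([\varphi]_W)$ and $\bar\partial_q([\psi]_W)$ have the same dimension. A short computation then shows their discriminants differ by a factor $\bar p^{\,\alpha-\beta}$, where $\alpha$ (resp.\ $\beta$) counts the $a_i$ (resp.\ $b_j$) of odd $q$-adic valuation. When $q=2$ this factor is invisible because $\IF_2^*/(\IF_2^*)^2$ is trivial, so no condition arises; when $q$ is odd and $p$ is a square in $\IF_q$ the discriminants coincide automatically; and when $q$ is odd with $p$ a non-square they agree if and only if $\alpha-\beta$ is even. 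Using $\nu_q(\det\varphi) \equiv \alpha+\beta \pmod 2$, I would rewrite this last condition as $\delta_q(\varphi)=0$, which is exactly condition~(1).

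The crucial and most delicate case is $q=p$, where the entries of even and odd valuation switch roles, giving
$$
\bar\partial_p([\varphi]_W) = \langle \bar b_1, \ldots, \bar b_{m_2} \rangle_W
\quad\text{and}\quad
\bar\partial_p([\psi]_W) = \langle \bar a_1, \ldots, \bar a_{m_1} \rangle_W
$$
in $W(\IF_p)$. By Proposition~\ref{prop:finite-field} these are Witt-equivalent if and only if $m_1 \equiv m_2 \pmod 2$, that is $n$ even, and their discriminants agree in $\IF_p^*/(\IF_p^*)^2$. The main obstacle is to identify this discriminant equality with condition~(2): I would rewrite the equality of the two discriminants as the single congruence $\res_p(\det\varphi) = (-1)^{\frac{m_1(m_1-1)}{2}+\frac{m_2(m_2-1)}{2}}$, using $\res_p(\det\varphi) = \overline{\prod_i a_i \prod_j b_j}$, and then apply the identity
$$
\tfrac{m_1(m_1-1)}{2} + \tfrac{m_2(m_2-1)}{2} = \tfrac{n(n-1)}{2} - m_1 m_2
$$
together with $\delta_p(\varphi) \equiv m_2 \pmod 2$. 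It then remains to check the parity identity $m_2(m_1+1) \equiv 0 \pmod 2$, which holds precisely because $n$ is even, so that the exponents $\tfrac{n(n-1)}{2}-m_1m_2$ and $\tfrac{n^2-n}{2}+\delta_p(\varphi)$ coincide modulo $2$. Assembling the conditions at $q=p$, at odd $q\neq p$, and the (empty) condition at $q=2$, and invoking Proposition~\ref{prop:QQ-equivalence}, yields the stated equivalence.
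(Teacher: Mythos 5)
Your proof is correct and follows essentially the same route as the paper: the same diagonalization $\varphi = \langle a_1,\ldots,a_{m_1}, p b_1,\ldots, p b_{m_2}\rangle$, the prime-by-prime comparison of residue forms via Propositions~\ref{prop:QQ-equivalence} and~\ref{prop:finite-field}, and the same parity bookkeeping at $q=p$ (your identity $\tfrac{m_1(m_1-1)}{2}+\tfrac{m_2(m_2-1)}{2}=\tfrac{n(n-1)}{2}-m_1m_2$ is just a repackaging of the paper's substitution $m_1=n-m_2$). The only cosmetic difference is that you spell out the $q=2$ case slightly more explicitly than the paper's ``since $p>2$''.
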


\begin{proof}
Up to equivalence we can assume that
$$
\varphi = \langle a_1, \ldots, a_{m_1}, p b_1, \ldots, p b_{m_2} \rangle,
$$
where $m_1+m_2=n$ and $a_i$, $b_j$ are square-free integers not divisible by $p$. Then $\varphi(p)$ is equivalent to
$$
\psi = \langle p a_1, \ldots, p a_{m_1}, b_1, \ldots, b_{m_2} \rangle.
$$
Clearly $\varphi$ and $\psi$ have the same signature and $\bar \partial_2([\varphi]_W) = \bar \partial_2([\psi]_W)$ since $p>2$.

Let $q$ be an odd prime number different from $p$. By a similar argument as in the proof of Proposition~\ref{due} we see that $\bar \partial_q([\varphi]_W)$ and $\bar \partial_q([\psi]_W)$ have the same dimension and that their discriminants are automatically equal when $p$ is a square in $\IF_q^*$. Otherwise they are equal if and only if ${\delta_q(\varphi)}=0$. 

Finally for $q=p$ we have
$$
\bar \partial_p ([\varphi]_W) = \langle \bar b_1, \ldots, \bar b_{m_2} \rangle_W \quad \text{and} \quad
\bar \partial_p ([\psi]_W) = \langle \bar a_1, \ldots, \bar a_{m_1} \rangle_W.
$$
The dimensions $m_1$ and $m_2$ of these forms have the same parity if and only if $n$ is even. Their discriminants are equal if and only if the following equation holds in $\IF_p^*/(\IF_p^*)^2$:
$$
(-1)^{\frac {m_1(m_1-1)}2} \prod_{i=1}^{m_1} a_i = (-1)^{\frac {m_2(m_2-1)}2} \prod_{j=1}^{m_2} b_j
$$
or equivalently
\begin{equation} \label{eq:disc}
 \prod_{i=1}^{m_1} a_i\prod_{j=1}^{m_2} b_j = (-1)^{\frac {m_1(m_1-1)+m_2(m_2-1)}2}.
\end{equation}

We have $\det(\varphi) = p^{m_2} \prod_{i=1}^{m_1} a_i \prod_{j=1}^{m_2} b_j$, so
$$
\nu_p(\varphi)=m_2\quad\text{ and }\quad\res_p(\det(\varphi)) = \prod_{i=1}^{m_1} a_i \prod_{j=1}^{m_2} b_j.
$$
Substituting $m_1 = n - m_2$ and using that $(-1)^n =1$ since $n$ is even, we see that equation \eqref{eq:disc} is equivalent to 
$$
\res_p (\det(\varphi)) = (-1)^{\frac{n^2-n}{2} + \delta_p(\varphi)}.
$$
\end{proof}

%%%%%%%%%%%%%%%%%%%%%%%%%%%%%%%%%%%%%%%%%%%%

\section{Isogenies induced by symplectic automorphisms of K3 surfaces}\label{sympl}

Let $X$ be a complex K3 surface with an automorphism $\sigma$ of prime order $p$ leaving invariant the
symplectic two-form of $X$; such an automorphism is called \emph{symplectic}. 
It is well-known (see \cite{Nikulin}) that $p\in\{ 2, 3, 5, 7\}$.
Symplectic automorphisms of order~2, often called \emph{Nikulin involutions}, have been extensively
studied (see \cite[Definition~5.1]{morrison}). Since the local action of $\sigma$ at a fixed point is given by
a matrix
$$
\left( 
\begin{array}{cc}
\omega & 0\\
0& \bar \omega
\end{array}
\right),
$$
where $\omega$ is a primitive $p$-th root of the unity, and $\bar{\omega}$ is its complex conjugate,
the automorphism $\sigma$ has only isolated fixed points; we denote  their number by $\lambda$.
By Nikulin \cite[Theorem 4.5]{Nikulin} the number $\lambda$ depends only on the order $p$ and is given by
$\lambda = \frac {24}{p+1}$. Moreover if $X$ is generic in the moduli space of K3 surfaces with a symplectic automorphism of order $p$ then:
$$
\rk\NS(X)=\lambda(p-1)+1,\quad\rk\Tr_X=21-\lambda(p-1).
$$
Since $\lambda$ is even, generically $\rk\Tr_X$ is odd.

The quotient surface $\bar Y:= X/\langle\sigma\rangle$ contains $\lambda$ singular
points of type $A_{p-1}$, which are the images of the fixed points in $X$. 
Therefore its minimal resolution $Y \rightarrow \bar Y$ contains $\lambda$ configurations 
of $(-2)$-curves of type $A_{p-1}$. Since the automorphism~$\sigma$ acts symplectically 
on $X$, the surface $Y$ is again a K3 surface (see \eg \cite[Proof of Theorem~4.5]{Nikulin})
and we get a degree $p$ isogeny
$$
\gamma\colon X\dashrightarrow Y.
$$

We give a precise description of the resolution of indeterminacies $\beta\colon\widetilde X\to X$ of the map $\gamma$ (see diagram (\ref{resol}) of Section~\ref{corresp})
generalizing \cite[Section 3]{morrison}.
Denote by 
$p_1,\ldots, p_{\lambda}$ the singular points on $\bar{Y}$, by $E_i^1,\ldots, E_i^{p-1}$ the $(-2)$-curves of the
$A_{p-1}$ graph on the singularity $p_i$ with the properties
\begin{eqnarray*}
(E_i^j)^2&=&-2,\\
(E_i^j,E_i^{j+1})&=&1, \qquad j=1,\ldots, p-2,\\
(E_i^j,E_i^{k})&=&0, \qquad \mbox{for all}~k\not = j+1.\\
\end{eqnarray*}
Let $M_p$ be the smallest primitive sublattice of the lattice $\HH^2(Y,\IZ)$ that contains all 
the curves $E_i^j$ for $i=1,\ldots, \lambda$ and $j=1,\ldots, p-1$. 
By Nikulin, \cite[Proposition~7.1, Theorem~7.2]{Nikulin} the lattice $M_p$ contains exactly
one divisible class of the form
$\eta/p$ where $\eta$ is a linear combination of the $E_i^j$. Since $\NS(Y)$ is a primitive 
sublattice of $\HH^2(Y,\IZ)$, the lattice $M_p$ is a sublattice of
$\NS(Y)$. An explicit expression for $\eta$ is given 
in \cite[Section 6, 1a), p. 116 and Definition 6.2]{Nikulin} as follows. Put:
$$
 \delta_i:= \sum_{j = 1}^{p-1} j E^j_i,
$$
the class $\eta$ can be written
$$
\eta=\sum_{i=1}^\lambda a_i \delta_i
$$
where 
\begin{align*}
(a_1,\ldots,a_{8})&=(1,\ldots,1)\text{ if }  p=2\\
(a_1,\ldots,a_{6})&=(1,\ldots,1)\text{ if } p=3\\
(a_1,a_2,a_3,a_4)&=(1,1,2,2)\text{ if } p=5\\
(a_1,a_2,a_3)&=(1,2,3)\text{ if } p=7.
\end{align*}
By \cite[I\S17]{bpv} there exists a cyclic 
covering
$\bar\gamma\colon\bar X\to Y$ of order $p$ ramified 
 on $\eta$.
If $p=2$ then $\bar{X}$ is the blow up of $X$ at the fixed points of $\sigma$.
In particular $\bar{X}=\widetilde X$ is smooth and we have a commutative diagram (see \cite{vanGeemen-Sarti} or \cite[Section 3]{morrison}):
$$
\xymatrix{
\widetilde X  \ar[d]_{\widetilde\gamma} \ar[r]^\beta     & X  \ar[d] \ar@(u,r)|-\sigma \ar@{-->}[ld]^\gamma \\
                Y         \ar[r]                   & \bar Y
}
$$
If $p>2$ the situation is more complicated since
$\bar{X}$ has Hirzebruch--Jung singularities over the singular points of $\eta$ 
(see \cite[I\S17, III\S2]{bpv} for their description).
We denote by $\varepsilon\colon\widetilde  X \rightarrow \bar X$ the minimal resolution of the singularities of $\bar{X}$.
The counterimage on the surface $\widetilde X$ of each $A_{p-1}$-configuration has the following dual graph:

\bigskip

$p = 3$: 
\begin{tikzpicture}[plain node/.style={circle,fill=white!10,draw},every node/.style={scale=0.6}]

\node[plain node] (A) [label=$-1$] {};
\node[plain node] (B) [right of=A,label=$-3$] {};
\node[plain node] (C) [right of=B,label=$-1$] {};

\draw (A) -- (B) -- (C);
\end{tikzpicture}

$p = 5$: 
\begin{tikzpicture}[plain node/.style={circle,fill=white!20,draw},every node/.style={scale=0.6}]

\node[plain node] (A) [label=$-1$] {};
\node[plain node] (B) [right of=A,label=$-2$] {};
\node[plain node] (C) [right of=B,label=$-3$] {};
\node[plain node] (D) [right of=C,label=$-1$] {};
\node[plain node] (E) [right of=D,label=$-5$] {};
\node[plain node] (F) [right of=E,label=$-1$] {};
\node[plain node] (G) [right of=F,label=$-3$] {};
\node[plain node] (H) [right of=G,label=$-2$] {};
\node[plain node] (I) [right of=H,label=$-1$] {};

\draw (A) -- (B) -- (C) -- (D) -- (E) -- (F) -- (G) -- (H) -- (I);
\end{tikzpicture}

$p = 7$: 
\begin{tikzpicture}[plain node/.style={circle,fill=white!10,draw},every node/.style={scale=0.6}]

\node[plain node] (A) [label=$-1$] {};
\node[plain node] (B) [right of=A,label=$-2$] {};
\node[plain node] (C) [right of=B,label=$-2$] {};
\node[plain node] (D) [right of=C,label=$-3$] {};
\node[plain node] (E) [right of=D,label=$-1$] {};
\node[plain node] (F) [right of=E,label=$-4$] {};
\node[plain node] (G) [right of=F,label=$-2$] {};
\node[plain node] (H) [right of=G,label=$-1$] {};
\node[plain node] (I) [right of=H,label=$-7$] {};
\node[plain node] (J) [right of=I,label=$-1$] {};
\node[plain node] (K) [right of=J,label=$-2$] {};
\node[plain node] (L) [right of=K,label=$-4$] {};
\node[plain node] (M) [right of=L,label=$-1$] {};
\node[plain node] (N) [right of=M,label=$-3$] {};
\node[plain node] (O) [right of=N,label=$-2$] {};
\node[plain node] (P) [right of=O,label=$-2$] {};
\node[plain node] (Q) [right of=P,label=$-1$] {};

\draw (A) -- (B) -- (C) -- (D) -- (E) -- (F) -- (G) -- (H) -- (I) -- (J) -- (K) -- (L) -- (M) -- (N) -- (O) -- (P) -- (Q);
\end{tikzpicture}

\bigskip

The self-intersection numbers of the curves can be obtained by a straightforward calculation with continuous fractions following \cite[Theorem III.5.1]{bpv}. For example for $p = 3$ we have a singularity of type $A_{3,1}$ on $\bar{X}$ over the point of intersection of $E_i^1$ and $E_i^2$ for each $i = 1,\ldots,\lambda$. After resolving it one obtains a curve $F_i$ with self-intersection~$-3$. Denote by $\widetilde\gamma\colon\widetilde X\rightarrow Y$ the composition of $\varepsilon$~and~$\bar \gamma$. Since $(\widetilde\gamma^*E_i^j)^2 = 3(E_i^j)^2 = -6$ and $\widetilde\gamma^*E_i^j = 3\tilde E_i^j + F_i$, we get
$$
 -6 = (\widetilde\gamma^*E_i^j)^2 = 9 (\tilde E_i^j)^2 + 6(\tilde E_i^j,F_i) + F_i^2  = 9 (\tilde E_i^j)^2 +6-3
 $$
so the strict transforms $\tilde E_i^j$ have self-intersection~$-1$
In the cases $p = 5$ or $p=7$ the computation is similar.

Contracting these configurations to smooth points we obtain the map $\beta\colon\widetilde X \rightarrow X$, and the following commutative diagram:
\begin{equation} \label{eq:diagram}
\xymatrix{
\bar X \ar[dr]_{\bar \gamma} & \widetilde X  \ar[l]_\varepsilon \ar[d]^{\widetilde\gamma} \ar[r]^\beta     & X         \ar[d] \ar@(u,r)|-\sigma \ar@{-->}[ld] \\
               & Y         \ar[r]                   & \bar Y
}
\end{equation}
Similar diagrams are described by Tan \cite{tan} in a more general setting. 

The map $\beta$ can be easily described by appropriate blow-ups. For instance if $p=3$, to get a smooth quotient $Y$ one has to blow-up three times
each of the fixed points on $X$ (two blow-ups are on the exceptional curve that we obtain after the first blow-up), so that the pullback of the order three automorphism $\sigma$ has no isolated fixed points on the exceptional set and 
 the quotient surface is smooth. The configuration of the exceptional curves after the three blow-ups is the same as in the graph described above
and the surface that we get is clearly $\widetilde{X}$. This works in a similar way for $p=5$ or $p=7$.

%%%%%%%%%%%%%%%%%%%%%%%%%%%%%%%%%%%%%%%%%%%%%%%

\section{A geometric proof of Corollary~\ref{main}}\label{symplnoniso}

Let $\sigma$ be a symplectic automorphism of prime order $p$ on a complex projective surface $X$. Let $Y$ be the minimal resolution
of $X/\sigma$ and let $\gamma\colon X\dasharrow Y$ be the associated isogeny (see Section~\ref{sympl}).
Corollary~\ref{main} is a direct consequence of Theorem~\ref{witt}, or more precisely Proposition~\ref{isog}.
In this section, we give a different proof based upon the geometric properties of the automorphism
generalizing \cite[Proposition 2.5]{vanGeemen-Sarti}.

The invariant sublattice $\HH^2(X,\IZ)^\sigma$ is primitive in $\HH^2(X,\IZ)$, we 
denote 
$$
M:=(\HH^2(X,\IZ)^{\sigma})^\perp
$$
its orthogonal complement in $\HH^2(X,\IZ)$. Since $\sigma$ acts symplectically on~$X$ one has $M \subset \NS(X)$. By \cite[Proposition 10.1]{Nikulin} (see also \cite{vanGeemen-Sarti, Garbagnati-Sarti}) we have 
$$
\rk M = \lambda (p - 1) \quad \text{and} \quad    \disc M = p^\lambda.
$$
In particular, $\rk M$ is always even. Denote
$$
V := \HH^2(X,\IZ)^\sigma\cap\NS(X)=M^\perp\cap\NS(X)
$$ 
the orthogonal complement of $M$ in $\NS(X)$. Using formula \eqref{eq:disc-perp} we obtain
\begin{equation} \label{eq:discV}
\quad p^\lambda \disc V \equiv \disc \NS(X) \mod (\IQ^*)^2.
\end{equation}

Recall (see Section~\ref{sympl}) that $M_p$ denotes the minimal primitive sublattice of $\NS(Y)$ that contains the curves $E^j_i$. Its rank is $\rk M_p=\lambda(p-1)=\rk M$. 
Since $M_p$ is an overlattice of the lattice generated by the curves $E^j_i$, whose discriminant is $p^\lambda$, formula \eqref{formule_disc} gives
$$
    \disc M_p \equiv p^\lambda \mod (\IQ^*)^2.
$$
More precisely by \cite[Proposition 10.1]{Nikulin} we have $\disc M_p=p^{\lambda-2}$.
Put 
$$
W := M_p^\perp\cap\NS(Y) \subset \NS(Y)
$$ 
Note that $\rk W=\rk V$. Using formula \eqref{eq:disc-perp} we get
\begin{equation} \label{eq:discW}
p^\lambda  \disc W \equiv \disc \NS(Y) \mod (\IQ^*)^2.
\end{equation}

Consider the morphism of Hodge structures $\gamma^\ast:=\beta_\ast\widetilde\gamma^\ast\colon\HH^2(Y,\IZ)\to\HH^2(X,\IZ)$ and its restriction to $W$. One has $\HH^2(\widetilde X,\IZ) \cong \HH^2(X,\IZ)\oplus\bigoplus_i \langle C_i \rangle$, where the curves~$C_i$ are
all the curves contracted by $\widetilde\gamma$. The map 
$$
\beta_\ast\colon\HH^2(\widetilde X,\IZ) \cong \HH^2(X,\IZ)\oplus \bigoplus_i\langle C_i \rangle \rightarrow \HH^2(X,\IZ)
$$
is the projection onto the first factor. For any $u\in W$, one has
$$
(\widetilde\gamma^*u, C_i) = (u, \widetilde\gamma_* C_i) = 0
$$
since $\widetilde\gamma_\ast C_i$ is either 0 or one of the curves $E^j_i$. It follows
that $\gamma^\ast$ maps $W$ injectively into $V$. Moreover
$$
\langle\beta_* \widetilde\gamma^* x, \beta_* \widetilde\gamma^* y\rangle_X = \langle\widetilde\gamma^* x, \widetilde\gamma^* y\rangle_{\widetilde X} = p\,\langle x,y\rangle_Y\qquad\forall x,y\in W.
$$
Since $\rk W=\rk V$, it follows that the restriction $\gamma^\ast\colon W_\IQ\to V_\IQ$ is an isomorphism and a dilation with scale factor $p$ of rational quadratic spaces. Thus we have
\begin{equation} \label{eq:discW-discV}
  \disc V \equiv p^{\rk W} \disc W \mod (\IQ^*)^2.
\end{equation}

If $\TransQX$ and $\TransQY$ are isometric, then $\disc \Tr_X \equiv \disc \Tr_Y\mod(\IQ^\ast)^2$. Since the K3 lattice is unimodular, this implies that  $\disc \NS(X) \equiv \disc \NS(Y)\mod (\IQ^\ast)^2$. Putting together equations \eqref{eq:discV}, \eqref{eq:discW} and \eqref{eq:discW-discV} we obtain
$$
p^{\rk W} \equiv 1 \mod (\IQ^*)^2.
$$
Since $\rk W=\rk\NS(X)-\lambda(p-1)$ and $\lambda (p-1)$ is even, we finally get
\[
    p^{\rk\NS(X)} \equiv 1 \mod (\IQ^*)^2.
\]
Since $p$ is prime this is possible if and only if $\rk \NS(X)=\rk \NS(Y)$ is even, or equivalently if $\rk\Tr_X$ is even.
This proves Corollary~\ref{main}.

%%%%%%%%%%%%%%%%%%%%

\section{Examples}\label{exos}

\subsection{Nikulin involutions}

A K3 surface $X$ admits a {\it Shioda--Inose structure} if there is a Nikulin involution $\iota$ on $X$ with rational quotient map 
$\gamma\colon X\dashrightarrow Y$ such that $Y$~is a Kummer surfaces and $\gamma_*$ induces an integral Hodge isometry between $\Tr_X(2)$ and $\Tr_Y$ (see \cite[Definition 6.1]{morrison}). By \cite[Corollary 6.4]{morrison} a K3 surface of Picard number $\rho(X):=\rk\NS(X)$ equal to $19$ or $20$ always admits such a 
structure, so these surfaces come in a natural way with an isogeny of degree $2$ to another K3 surface. If $\rho(X)=19$ then Theorem~\ref{witt} says that there exist no isometry between
$\TransQY$ and $\TransQX$. If $\rho(X)=20$, using Remark~\ref{sym} we see that $\TransQX$ and $\TransQY$ are isometric rational quadratic forms if and only if $\TransQX(2)\cong \TransQX$.
It is not difficult in this case to produce several examples where they are not isometric. Recall that there is a bijection between positive definite even integral matrices of the form 
$$
Q:=\left( 
\begin{matrix}
2a& b\\
b& 2c
\end{matrix}\right), 
$$ 
with $a,b,c,\IZ$, $d:=4ac-b^2>0$, $a,c>0$, $-a\leq b\leq a\leq c$ and transcendental lattices of K3 surfaces of Picard number $20$ (see \cite[Theorem 4]{shiodainose}). Over $\IQ$ the form~$Q$ is equivalent to
the rational quadratic form $\varphi_Q=\langle 2a, \frac{d}{2a}\rangle$. By Proposition~\ref{due}, if the determinant $d=\det(\Tr_X)$ is divisible by a prime number
$q$ congruent to $3$ or $5$ modulo $8$ with an odd exponent then $\TransQX(2)$ is not isometric to $\TransQX$, so by Theorem~\ref{witt} the rational lattice $\TransQX$ is not isometric to $\TransQY$. We give some explicit examples below using well-known K3 surfaces.

\subsection{The Fermat quartic}

Let $F$ be the Fermat quartic $x^4+y^4+z^4+t^4=0$. Its transcendental lattice is given by the quadratic form
$$
\Tr_F = \left(\begin{matrix}
    8 & 0 \\
    0 & 8
  \end{matrix}\right) 
$$
which is equivalent over $\IQ$ to the rational quadratic form $\varphi_F := \langle 2, 2 \rangle$.
We have $\det(\varphi_F)=2^2$ so the $q$-adic valuation of the determinant for $q$ congruent to $3$ or $5$ modulo 8 is always $0$. By Theorem~\ref{witt} we get $\varphi_F \cong \varphi_F(2)$.

The surface $F$ also admits a symplectic automorphism of order three (see \cite{mukaifinite}). We see that $\Tr_{F,\IQ}(3)$ is not isometric to $\Tr_{F,\IQ}$: since $\det(\Tr_F)=2^2\cdot 3^2$, the second condition of Proposition~\ref{primo} can be written:
$$
4=\res_3(\det(\Tr_F))=(-1)^{1+2}=-1,
$$
which is not true in $\IF_3^\ast/(\IF_3^\ast)^2$.

\subsection{The Schur quartic}

Let $S$ be the Schur quartic $x^4-xy^3=z^4-zt^3$. Its transcendental lattice is given by the quadratic form
$$
\Tr_S = \left(\begin{matrix}
    8 & 4 \\
    4 & 8
  \end{matrix}\right)
$$
which is equivalent over $\IQ$ to the rational quadratic form $\varphi_S := \langle 2, 6 \rangle$.
We have $\det(\varphi_S)=3\cdot 2^2$ so the $3$-adic valuation of the determinant is odd. By Theorem~\ref{witt} we get $\varphi_S \not \cong \varphi_S(2)$.

The surface $S$ admits a symplectic automorphism of order three, for instance
$(x:y:z:t)\mapsto (x:\omega y: z:\bar{\omega} t)$ where $\omega$ is a primitive third root of the unity. 
Here $\Tr_{S,\IQ}(3)$ is equivalent to $\langle 6, 18 \rangle\cong \langle 6, 2\rangle$ so it is isometric to $\Tr_{S,\IQ}$. 

\subsection{Surfaces with many nodes}

In the papers \cite{BS} and \cite{iocanad} the authors describe some 
$1$-dimensional families of K3 surfaces obtained as special quotients
of pencils of surfaces in $\IP^3$ that contain surfaces with many nodes.
In particular, looking in \cite[Table 1]{iotransc} we see that there are three 1-dimensional families
such that for all K3 surfaces in the pencil the rational transcendental Hodge structures are no
isometric.

\subsection{Kummer surfaces}

One standard way to produce isogenies that do not come from symplectic automorphisms is to use Kummer surfaces (see \eg \cite{inoseisog}). Let $A$ be a complex abelian surface and $\Gamma$ a subgroup of prime order $p$ of the group $A[p]\cong(\IZ/p\IZ)^{\oplus 4}$ of $p$-torsion points of $A$. Denoting $B:=A/\Gamma$ we have a degree $p$ morphism $g\colon A\lra B$.  
It is easy to see that $g$ induces an order $p$ isogeny between the Kummer surfaces
associated to $A$ and $B$, denoted $\gamma\colon\Km(A)\dashrightarrow\Km(B)$:
$$
\xymatrix{
A  \ar@{.>}[d]\ar[r]^{g}     & B     \ar@{.>}[d]    \\
    \Km(A)       \ar@{.>}[r]^{\gamma}                   & \Km(B)
}
$$
where the vertical arrows are the birational quotients by the involutions $(-\id)$ with 16 fixed points.
 At least for $p>7$ this isogeny can certainly not be induced by a symplectic automorphism of order $p$ on $\Km(A)$ and non-symplectic automorphisms do not produce such maps.

\bibliographystyle{plain}
\bibliography{biblio}

\end{document}